\newcommand{\R}{\mathbb R}
\newcommand{\eps}{\varepsilon}
\newtheorem{theorem}{Theorem}[section]
\newtheorem{lemma}[theorem]{Lemma}
\newtheorem{proposition}[theorem]{Proposition}
\newtheorem{definition}[theorem]{Definition}
\theoremstyle{remark}
\newtheorem{remark}[theorem]{Remark}
\DeclareMathOperator{\cat}{cat}
\DeclareMathOperator{\supt}{supt}
\numberwithin{equation}{section}
\begin{document}

\title{Positive solutions for singularly perturbed nonlinear elliptic problem on manifolds
via {Morse} theory}
\author{Marco Ghimenti
\thanks{Dipartimento di Matematica e Applicazioni,
Universit\`a degli Studi di Milano Bicocca,
Via Cozzi, 53, Milano, ITALY. e-mail:
\texttt{marco.ghimenti@unimib.it}}
\and Anna Maria Micheletti
\thanks{Dipartimento di Matematica Applicata,
Universit\`a degli Studi di Pisa, Via F. Buonarroti 1/c, Pisa,
ITALY. e-mail: \texttt{a.micheletti@dma.unipi.it}}}

\date{\em{Dedicated to Franco Nicolosi}}
\maketitle

\begin{abstract}
Given $(M,g_0)$ we consider the problem
$-\eps^2\Delta_{g_0+h}u+u=(u^+)^{p-1}$ with $(\eps,h)\in (0,\bar\eps)\times\mathscr{B}_\rho$. 
Here $\mathscr{B}_\rho$ is a ball centered at $0$ with radius $\rho$ in the Banach space of 
all $C^k$ symmetric covariant $2$-tensors on $M$. Using the Poincar\'e polynomial of $M$, 
we give an estimate on the number of nonconstant
solutions with low energy for $(\eps,h)$ belonging to an residual subset of 
$(0,\bar\eps)\times\mathscr{B}_\rho$, for $\bar\eps$, $\rho$ small enough.

\noindent{ \bf Keywords:} singular perturbation, nondegenerate critical points, Morse theory

\noindent{\bf AMS subject classification:} 58G03, 58E30
\end{abstract}

\section{Introduction}

Let $(M,g)$ be a smooth compact connected Riemannian manifold of dimension $n\geq2$ 
without boundary, endowed with the metric tensor $g$. We are interested in the following 
problem
\begin{equation}\label{eq1-1}
\left\{
\begin{array}{cl}
-\eps^2\Delta_gu+u=|u|^{p-2}u&\text{in }M\\
u\in H^1_g(M),&u>0.
\end{array}
\right.
\end{equation}
where $2<p<\frac{2n}{n-2}$ with $n\geq 3$, $p>2$ if $n=2$,
and $\eps$ is a positive parameter. Here $H^1_g(M)$ is the completion of $C^\infty(M)$
with respect to the norm $\displaystyle ||u||_g^2=\int_M|\nabla_g u|^2+u^2d\mu_g$.

It is well known that any critical point of the energy functional $J_{\eps,g}:H^1_g\to\R$ 
constrained to the Nehari manifold $N_{\eps,g}$ is a solution of (\ref{eq1-1}). Here 
\begin{equation*}
\begin{split}
J_{\eps,g}(u)&=\frac 1{\eps^n}\int_M\left(\frac{\eps^2}{2}|\nabla_g u|^2+\frac12 u^2
-\frac1p  (u^+)^p\right)d\mu_g\\
N_{\eps,g}&=\{u\in H^1_g(M)\smallsetminus 0 \ :\ J'_{\eps,g}(u)[u]=0\}.
\end{split}
\end{equation*}

A lot of work has been devoted to the problem (\ref{eq1-1}) in various kinds of subsets of $\R^n$.
We limit ourselves to citing the pioneering papers \cite{BaCo88,BaL90,BaLi97,BC91,BC94,Da88}.

In \cite{BP05} the authors shows that the least energy solution of (\ref{eq1-1}), i.e. the 
minimum of $J_{\eps,g}$ on $N_{\eps,g}$, is a positive solution with a spike layer, whose peak 
converges to the maximum point of the scalar curvature $S_g$ of $(M,g)$ as $\eps$ goes to zero.
Both topology and geometry influence the multiplicity of positive solution of problem (\ref{eq1-1}). 
Recently in \cite{DMP09,MP09b,MP09} it has been proved that the existence of positive solutions 
is strongly related to the geometry of $M$, that is stable critical points of the scalar curvature $S_g$ 
generate positive solutions with one ore more peaks as $\eps$ goes to zero.
Previously in \cite{BBM07} (see also \cite{H09,V08}) the authors point out that the topology of 
$M$ has effect on the number of solutions of (\ref{eq1-1}), that is (\ref{eq1-1}) has at least $\cat M$ 
nonconstant solutions for $\eps$ small enough. Here $\cat M$ is the Lusternik Schnirelmann
category of $M$. Moreover in \cite{BBM07} the Poincar\'e polynomial is considered
(see Definition \ref{poinpol}) and the authors assume that 
\begin{equation}
\text{all the solution of the problem (\ref{eq1-1}) are nondegenerate.} 
\end{equation}
Then they prove that problem (\ref{eq1-1}) has at least $2P_1(M)-1$ solutions.

Our main result reads as following.
\begin{theorem}\label{th1-1}
Given $g_0\in \mathscr{M}^k$, the set 
\begin{equation*}
D=\left\{
\begin{array}{c} 
(\eps,h)\in(0,\tilde\eps)\times\mathscr{B}_{\tilde\rho}:
\text{ the problem }-\eps^2\Delta_{g_0+h}u+u=(u^+)^{p-1}\\
\text{has at least }P_1(M)\text{ nonconstant solutions }u 
\text{ with } J_{\eps,g_0+h}(u)<2m_\infty
\end{array}
\right\}
\end{equation*}
is an residual subset in $(0,\tilde\eps)\times\mathscr{B}_{\tilde\rho}$, for $\tilde\eps$ 
and $\tilde\rho$ chosen small enough. 
\end{theorem}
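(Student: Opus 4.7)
The plan is to combine a topological lower bound on the number of low-energy critical points of $J_{\eps,g_0+h}$ with a generic nondegeneracy statement for the perturbation parameter $(\eps,h)$, and then close the argument with Morse inequalities.

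First, I would set up the variational framework. For fixed $(\eps,h)$, solutions of the equation with $J_{\eps,g_0+h}(u)<2m_\infty$ correspond to critical points of $J_{\eps,g_0+h}$ on the Nehari manifold $N_{\eps,g_0+h}$ lying in the sublevel set $N^{2m_\infty}_{\eps,g_0+h}$. The energy ceiling $2m_\infty$ is chosen to be below the threshold at which Palais--Smale sequences can split into two bubbles, so compactness is available. One then constructs, following the ``photography'' technique used in \cite{BBM07,BP05} and in the articles by Benci--Cerami and their descendants, a ``barycenter'' map $\beta:N^{c}_{\eps,g_0+h}\to M$ (for some $c$ slightly above $m_\infty$) together with a concentration map $\Phi:M\to N^{c}_{\eps,g_0+h}$ such that $\beta\circ\Phi$ is homotopic to $\Id_M$. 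This implies the Poincar\'e polynomial $P_t(N^{c}_{\eps,g_0+h})$ dominates $P_t(M)$, at least in an appropriate sense, so the sublevel set has enough topology to force many critical points.

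Second, I would establish the generic nondegeneracy. Consider the map
\begin{equation*}
F:(0,\tilde\eps)\times\mathscr{B}_{\tilde\rho}\times \bigl(H^1\smallsetminus\{0\}\bigr)\to H^{-1},\qquad F(\eps,h,u)=-\eps^2\Delta_{g_0+h}u+u-(u^+)^{p-1},
\end{equation*}
restricted to a neighbourhood of the set of low-energy solutions. Show that $F$ is $C^k$ jointly and that the partial derivative $D_uF$ is Fredholm of index zero. Then verify that $D_{(h,u)}F$ is surjective at every zero of $F$; the crucial point is that varying $h$ in the space of symmetric $C^k$ $2$-tensors perturbs the principal part $-\eps^2\Delta_{g}$ in a sufficiently rich way (one expands $\Delta_{g_0+h}u$ linearly in $h$ and uses unique continuation to rule out that a nontrivial kernel element of the linearized operator is orthogonal to every such variation). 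The Sard--Smale theorem then yields that the projection onto $(\eps,h)$ from the zero set of $F$ has a residual set of regular values; on this set, every low-energy solution is nondegenerate as a critical point of $J_{\eps,g_0+h}$ on $N_{\eps,g_0+h}$. A separate routine check excludes the constant solution from the count, since its linearization is an invertible operator with trivial kernel outside a discrete set of $\eps$.

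Third, I would conclude via Morse theory. Once nondegeneracy is established at $(\eps,h)$ in the residual set, denote by $\mathcal{M}_q$ the number of critical points of index $q$ of $J_{\eps,g_0+h}$ on $N_{\eps,g_0+h}$ in the sublevel set. The Morse relations
\begin{equation*}
\sum_{q\geq 0}\mathcal{M}_q\, t^q = P_t(N^{2m_\infty}_{\eps,g_0+h}) + (1+t)Q(t)
\end{equation*}
with $Q$ having nonnegative coefficients, evaluated at $t=1$, yield at least $P_1(M)$ nonconstant solutions once one inserts the topological lower bound from the first step.

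The main obstacle will be the generic nondegeneracy step: establishing the surjectivity of $D_{(h,u)}F$ at every solution requires a careful choice of function spaces (so that $h$ acts as a bounded perturbation and the Fredholm property holds), together with a unique-continuation/density argument showing that variations of the metric exhaust all directions in which the linearized operator can be perturbed. The residuality (rather than openness and density) comes from restricting to compact subfamilies and taking a countable intersection.
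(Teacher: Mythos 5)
Your proposal follows essentially the same route as the paper: the photography maps $\Phi_{\eps,g_0+h}$ and $\beta_{g_0+h}$ with $\beta\circ\Phi\simeq\Id_M$ give $P_t(J^{m_\infty+\delta}_{\eps,g_0+h}\cap N_{\eps,g_0+h})=P_t(M)+Z(t)$, generic nondegeneracy in $(\eps,h)$ reduces the count to Morse relations, and evaluation at $t=1$ yields $P_1(M)$ solutions; the only structural difference is that the paper does not prove the transversality step but imports it wholesale as Theorem \ref{teogen} from \cite{MP09c}, whereas you sketch the Sard--Smale argument behind it. Two small corrections to your write-up. First, the constant solution $u\equiv 1$ cannot be ``excluded from the count'' by showing its linearization is invertible --- a nondegenerate critical point still contributes to the Morse series; it is excluded because $J_{\eps,g_0+h}(1)=(\tfrac12-\tfrac1p)\eps^{-n}\mu_{g_0+h}(M)>2m_\infty$ for $\eps$ small (Remark \ref{rho2}), so it simply does not lie in the sublevel set under consideration, and Lemma \ref{lemma3} is what lets one apply the generic nondegeneracy theorem on a ball avoiding $1$. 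Second, the Palais--Smale condition here holds at \emph{all} levels, by compactness of $M$ and subcriticality of $p$; the ceiling $2m_\infty$ is not a bubbling threshold for compactness but serves to exclude the constant and to force the concentration behaviour that makes $\beta_{g_0+h}$ land in $M_{r(M)}$. Neither point invalidates your argument, since the correct justifications are immediate, but as stated those two steps would not do the job assigned to them.
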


Here $\mathscr{S}^k$ is the space of all $C^k$ symmetric covariant $2$-tensors on $M$ 
and $\mathscr{M}^k$ is the set of all $C^k$ Riemannian metrics on $M$ with $k\geq2$. 
The set $\mathscr{B}_\rho$ is the ball centered at $0$ with radius $\rho$ in the Banach space 
$\mathscr{S}^k$. The number $m_\infty$ is defined by 
\begin{displaymath}
m_\infty=\inf\{J_\infty(v)\ :\ J_\infty'(v)v=0\text{ and }v\neq0\}
\end{displaymath}
where
$\displaystyle J_\infty(v)=\int_{\R^n}\frac12 (|\nabla v|^2+v^2)-\frac 1p|v|^pdx$. 

The paper is organized as follows. In Section \ref{sec2} we fix some notations and we recall 
some results which will be crucial in the proof of main result. In Section \ref{sec3} we prove 
the main result, using some technical results showed in sections \ref{sec4}, \ref{sec5}.

\section{Notation, definition, known results}\label{sec2}
Through this paper we will use the following notations
\begin{itemize}
\item $B(0,R)$ is the ball in $\R^n$ of center $0$ and radius $R$.
\item $B_g(q,R)$ is the geodesic ball in $M$ of center $q$ 
and radius $R$ with the 
distance given by the metric $g$.
\item $\mathscr{ B}_\rho$ is the ball in  the Banach space $\mathscr{ S}^k$ 
 of center $0$ and radius $\rho$.
\item $I(u,r)$ is the ball in $H^1_g$ of center $u$ and radius $r$

\item For $u\in H^1_g(M)$ we use the norms
\begin{eqnarray*}
||u||_{g}^2=\int_M(|\nabla_g u|^2+|u|^2)d\mu_g&&
|u|^p_{p,g}=\int_M|u|^p d\mu_g\\
|||u|||_{g,\eps}^2=\frac 1{\eps^n}\int_M(\eps^2|\nabla_g u|^2+|u|^2)d\mu_g&&
|u|^p_{p,g,\eps}=\frac 1{\eps^n}\int_M|u|^p d\mu_g
\end{eqnarray*}
\item For $u\in H^1(\R^n)$ we use the norms
\begin{eqnarray*}
||u||^2=\int_{\R^n}(|\nabla u|^2+|u|^2)dx&&
|u|^p_{p}=\int_{\R^n}|u|^p dx
\end{eqnarray*}
\item $m_{\eps,g}=\inf\{J_{\eps,g}(v)\ :\ v\in N_{\eps,g}\}$
\end{itemize}

It is know that there exists a unique positive spherically symmetric function 
$U\in H^1(\R^n)$ such that $J_\infty(U)=m_\infty$. Obviously we have
\begin{equation}
-\Delta U+U=U^{p-1}.
\end{equation}
For $\eps>0$ we set $U_\eps(x)=U(x/\eps)$ and we get 
$-\eps^2\Delta U_\eps+U_\eps=U_\eps^{p-1}$.

Now we shall recall some topological tools which are used in the paper

\begin{definition}\label{poinpol} 
If $(X,Y)$ is a couple of topological spaces, the Poincar\'e polynomial
$P_t(X,Y)$ is defined as the following power series in $t$
\begin{equation}
P_t(X,Y)=\sum_k \dim H_k(X,Y)t^k
\end{equation}
where $H_k(X,Y)$ is the $k$-th homology group of the couple $(X,Y)$ 
with coefficient in some field. 
Moreover we set 
\begin{equation}
P_t(X)=P_t(X,\emptyset)=\sum_k \dim H_k(X)t^k
\end{equation}
\end{definition}
If $X$ is a compact manifold there only a finite number of nontrivial $H_k(X)$ and
$\dim H_k(X)<\infty$. In this case 
$P_t(X)$ is a polynomial and not a formal series.
\begin{definition}\label{mindex}
Let $J$ be a $C^2$ functional on a Banach space $X$ and let $u\in X$ 
be an isolated critical point of $J$ with $J(u)=c$. 
If $J^{c}:=\{v\in X\ :\ J(v)\leq c \}$, then the (polynomial) 
Morse index $i_t(u)$ is the series
\begin{equation}
i_t(u)=\sum_k \dim H_k(J^{c},J^{c}\smallsetminus\{u\})t^k,
\end{equation}
\end{definition}
If $u$ is a nondegenerate critical point of $J$ then 
$i_t(u)=t^{\mu(u)}$ where $\mu(u)$ is the (numerical) Morse index of $u$,
and it is given by the dimension of the maximal subspace on which the 
bilinear form $J''(u)[\cdot,\cdot]$ is negative definite.

It is useful to recall the following result (see \cite{BC94})
\begin{remark}\label{BC}
Let $X$ and $Y$ be topological spaces. If $f:X\to Y$ and $g:Y\to X$ are continuous 
maps such that $g\circ f$ is homotopic to the identity map on $X$, 
then
\begin{equation}
P_t(Y)=P_t(X)+Z(t)
\end{equation}
where $Z(t)$ is a polynomial with non negative coefficients.
\end{remark}
\begin{definition}\label{psdef}
Let $J$ be a $C^1$ functional on a Banach space $X$. 
We say that $J$ satisfies the Palais Smale condition if any sequence 
$\{x_n\}_n\subset X$ for which $J(x_n)$ is bounded and $J'(x_n)\to 0$ 
has a convergent subsequence.
\end{definition}
We now introduce the Banach space $\mathscr{S}^k$ which will be the parameter space. 
We denote by $\mathscr{S}^k$ the Banach space of all $C^k$ symmetric covariant symmetric 
$2$-tensors on $M$. The norm $||\cdot||_k$ is defined in the following way. We fix a 
finite covering $\{V_\alpha\}_{\alpha \in L}$ of $M$ such that the closure of $V_\alpha$ 
is contained in $U_\alpha$ where $\{U_\alpha, \psi_\alpha\}$ is an open coordinate 
neighborhood. If $h\in\mathscr{S}^k$ we denote by $h_{i,j}$ the components of $h$ with 
respect to the coordinates $(x_1,\dots,x_n)$ on $V_\alpha$. We define 
\begin{equation}
||h||_k=\sum_{\alpha\in L}\sum_{|\beta|\leq k}\sum_{i,j=1}^n\sup_{\psi_\alpha(V_\alpha)}
\frac{\partial^\beta h_{i,j}}{\partial x_1^{\beta_1}\cdots \partial x_k^{\beta_k}}
\end{equation}
The set $\mathscr{M}^k$ of all $C^k$ Riemannian metrics on $M$  is an open subset 
of $\mathscr{S}^k$. 

On the tangent bundle of any compact connected Riemannian manifold $M$, 
it is defined the exponential map $\exp: TM\to M$ which is a $C^\infty$ map.
Then, for $\rho $ small enough, the manifold $M$ has a special set of charts given by 
$\exp_x:B(0,R)\to B_g(x,R)$ (where $T_xM$ is identified with $\R^n$) for $x\in M$.
The system of coordinates corresponding to these charts are called normal coordinates.

\begin{remark}\label{rho1}
Let $g_0$ a fixed $C^k$ Riemannian metric on the manifold $M$.
By the compactness of $M$ there exist two positive constant $c,C$ such that
\begin{eqnarray*}
&&\forall x\in M, \forall \xi \in T_xM  \ \ \ 
c||\xi||^2\leq g_0(x)(\xi,\xi)\leq C||\xi||^2;\\
&&\forall x\in M \ \ \ 
c^n\leq |g_0(x)|\leq C^n.
\end{eqnarray*}
\end{remark}

By definition of the norms $|||u|||_{g,\eps}$ and $||h||_k$ we have that  
there exists $\rho_1>0$ such that, if $h\in  \mathscr{B}_{\rho_1}$, the 
two sets $H^1_g(M)$, $H^1_{g_0}(M)$ are the same and the two norms 
$|||u|||_{g,\eps} $, $|||u|||_{{g_0},\eps}$ (as well as 
$|||u|||_{g} $, $|||u|||_{{g_0}}$) are equivalent, and the positive constants for the equivalence 
do not depend on $\eps$, for $0<\eps<1$.

\begin{remark}\label{rho2}
It is trivial that there exists $\rho_2$ such that, for any 
$h\in \mathscr{B}_{\rho_2}$, we have
\begin{equation}
J_{\eps,g_0+h}(1)=\left(\frac 12 -\frac1p\right)\frac1{\eps^n}
\int_M1d\mu_{g_0+h}>\frac{p-2}{2p}\frac 1{\eps^n}\frac  {\mu_{g_0}(M)}2.
\end{equation}
Then $J_{\eps,g_0+h}(1)>2m_\infty$ for 
$\displaystyle \eps<\left[\frac{p-2}{8p m_\infty}\mu_{g_0}(M)\right]^{1/n}$ 
and $h\in \mathscr{B}_{\rho_2}$.
\end{remark}
In the following we consider $g=g_0+h$ with 
$h \in \mathscr{B}_{\hat \rho}$ where $\hat \rho=\min\{\rho_1,\rho_2\}$.
\begin{lemma}\label{lemma3}
There exists $\eps_1\in(0,1)$ such that, for any $\eps<\eps_1$ and for any 
$h\in \mathscr{B}_{\hat \rho}$, we have
\begin{equation}
A(h,\eps):=\{u\in N_{\eps, g_0+h}\ :\ J_{\eps,g_0+h}(u)\leq2m_\infty\}
\subset I(0,\alpha)\smallsetminus 1
\end{equation}
for some $\alpha>0$.
\end{lemma}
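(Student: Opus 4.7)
The plan is to verify the two claims hidden in the statement: first, that $A(h,\eps)$ sits in an $H^1_g$-ball $I(0,\alpha)$ whose radius is uniform in $\eps$ and $h$; and second, that the constant function $1$, which is automatically on the Nehari manifold, is excluded from $A(h,\eps)$ for small $\eps$.

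For the first claim I would use the Nehari constraint to simplify the energy. Writing $g=g_0+h$, any $u\in N_{\eps,g}$ satisfies $|||u|||_{g,\eps}^2=|u^+|_{p,g,\eps}^p$, and substituting this back into the definition of $J_{\eps,g}$ gives
\begin{equation*}
J_{\eps,g}(u)=\left(\frac12-\frac1p\right)|||u|||_{g,\eps}^2.
\end{equation*}
Hence the constraint $J_{\eps,g}(u)\le 2m_\infty$ yields the uniform bound $|||u|||_{g,\eps}^2\le C_0:=4p\,m_\infty/(p-2)$. To translate this into a bound on the ordinary norm $||u||_g$, I would note that for $\eps\in(0,1)$ and $n\ge 2$ the factors $\eps^{n-2}$ and $\eps^n$ are both at most $1$, so
\begin{equation*}
||u||_g^2=\int_M(|\nabla u|^2+u^2)\,d\mu_g\le (\eps^{n-2}+\eps^n)|||u|||_{g,\eps}^2\le 2C_0.
\end{equation*}
Combining with the uniform equivalence of $||\cdot||_g$ and $||\cdot||_{g_0}$ for $h\in\mathscr{B}_{\hat\rho}\subset\mathscr{B}_{\rho_1}$ noted just before Remark \ref{rho2}, I can pick $\alpha$ depending only on $m_\infty$, $p$, $n$, and $g_0$ so that $u\in I(0,\alpha)$.

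For the second claim, I would first check that the constant function $u\equiv 1$ lies on $N_{\eps,g}$: a direct computation gives $J'_{\eps,g}(1)[1]=\eps^{-n}\int_M(1-1)\,d\mu_g=0$. Then I would invoke Remark \ref{rho2}: as soon as $\eps<[(p-2)\mu_{g_0}(M)/(8p\,m_\infty)]^{1/n}$ and $h\in\mathscr{B}_{\hat\rho}\subset\mathscr{B}_{\rho_2}$, one has $J_{\eps,g}(1)>2m_\infty$, so $1\notin A(h,\eps)$. Taking $\eps_1\in(0,1)$ strictly below this threshold concludes the argument. I do not foresee any real obstacle here: the lemma is essentially bookkeeping, and the only subtlety is the careful tracking of the $\eps$-weighted norms and the $h$-uniformity of all constants, which is guaranteed by the choice $\hat\rho=\min\{\rho_1,\rho_2\}$.
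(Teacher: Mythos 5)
Your proposal is correct and follows essentially the same route as the paper: on the Nehari manifold $J_{\eps,g}(u)=\bigl(\tfrac12-\tfrac1p\bigr)|||u|||^2_{g,\eps}$, the energy bound gives a uniform bound on $|||u|||_{g,\eps}$, which for $\eps<1$ and $n\ge 2$ controls $||u||_{g_0}$ via the $h$-uniform norm equivalence, and Remark \ref{rho2} excludes the constant $1$. The only (immaterial) difference is bookkeeping: the paper passes through the monotonicity $|||u|||_{\eps_1,g_0}\le|||u|||_{\eps,g_0}$ before applying the equivalence constant $c_1$, while you compare $||\cdot||_g$ with $|||\cdot|||_{g,\eps}$ directly.
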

\begin{proof}
If $u\in A(h,\eps)$ we have 
\begin{equation}\label{eq2-1}
J_{\eps,g}(u)= \left(\frac 12 -\frac 1p\right) |||u|||_{\eps,g}^2\leq 2m_\infty,
\end{equation}
where $g=g_0+h$ with $h\in\mathscr{B}_{\hat \rho}$. By definition of 
$\hat \rho$, there exists 
$c_1>0$ such that 
\begin{equation}\label{eq2-2}
c_1||u||^2_{g_0}\leq c_1|||u|||^2_{\eps_1,g_0}\leq  c_1|||u|||^2_{\eps,g_0}\leq
|||u|||^2_{\eps,g}
\end{equation}
for $0<\eps<\eps_1$ and $h\in \mathscr{B}_{\hat \rho}$. 
Then by (\ref{eq2-1}) and (\ref{eq2-2}) we have 
\begin{equation}
||u||^2_{g_0}\leq \frac {2p}{p-2}\frac{2m_\infty}{c_1}.
\end{equation}
By Remark \ref{rho2} we have $u\neq 1$ for $\eps_1$ small enough.
By the following Remark \ref{remark5-9}, 
we have that $A(h,\eps)\neq \emptyset$ for $\eps_1$, and 
$\hat \rho$ small enough. 
\end{proof}
Now we recall a result about the nondegeneracy of positive solutions of (\ref{eq1-1}) 
with respect to the pair of parameters $(\eps,g)$, where $\eps$ is a positive number and 
$g$ is a Riemannian metric (see \cite{MP09c}).
\begin{theorem}\label{teogen}
Given $g_0\in\mathscr{M}^k$, and an open ball of $H^1_{g_0}(M)$ without the constant $1$ 
$A=I(0,\alpha)\smallsetminus \{1\}$, the set 
\begin{equation*}
D=\left\{
\begin{array}{c} 
(\eps,h)\in(0,1)\times\mathscr{B}_{\rho}
\text{ s.t. any }u\in A 
\text{ solution of the equation }\\-\eps^2\Delta_{g_0+h}u+u=(u^+)^{p-1}
\text{is nondegenerate}
\end{array}
\right\}
\end{equation*}
is an residual subset in $(0,1)\times\mathscr{B}_{\rho}$ for 
$\rho$ small enough.
\end{theorem}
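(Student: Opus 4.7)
The plan is to prove residuality of $D$ via a classical transversality argument: realize $D$ as the set of regular values of a Fredholm projection and appeal to the Sard--Smale theorem.

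First I would reformulate the PDE as a zero-set problem. For $(\eps,h)\in(0,1)\times\mathscr{B}_{\hat\rho}$, let $T_{\eps,h}:H^1_{g_0}\to H^1_{g_0}$ denote the bounded inverse of $-\eps^2\Delta_{g_0+h}+\Id$, which is well defined with uniform bounds by Remark \ref{rho1}, and set
\[
F(\eps,h,u)\;=\;u\;-\;T_{\eps,h}\bigl[(u^+)^{p-1}\bigr].
\]
This is a $C^1$ map from $(0,1)\times\mathscr{B}_\rho\times H^1_{g_0}$ to $H^1_{g_0}$; its zeros in $A$ are exactly the solutions of the equation, and $u$ is nondegenerate iff $D_uF(\eps,h,u)$ is an isomorphism. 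The differential
\[
D_uF(\eps,h,u)[\varphi]\;=\;\varphi\;-\;(p-1)\,T_{\eps,h}\bigl[(u^+)^{p-2}\varphi\bigr]
\]
is a compact perturbation of the identity on $H^1_{g_0}$ (by subcriticality of $p$ and Rellich--Kondrachov), hence Fredholm of index $0$, so nondegeneracy reduces to injectivity.

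The main technical obstacle is the transversality condition: at every zero $(\eps,h,u)$ of $F$, the differential with respect to $(h,u)$ must be surjective. Since $D_uF$ has finite-codimensional closed range, it suffices to show that for any $v\in H^1_{g_0}\setminus\{0\}$ which annihilates the range of $D_uF(\eps,h,u)$, there exists $\dot h\in\mathscr{S}^k$ with $\bigl\langle D_hF(\eps,h,u)[\dot h],\,v\bigr\rangle_{H^1_{g_0}}\neq 0$. The classical formula for the variation of $\Delta_{g_0+h}$ with respect to $h$ rewrites this pairing as an integral over $M$ of the contraction of $\dot h$ with a symmetric $2$-tensor built pointwise from $u$, $\nabla_g u$, $v$, $\nabla_g v$ (plus lower-order terms involving $\mathrm{tr}_g\dot h$). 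By elliptic regularity $u$ and $v$ are of class $C^{2,\alpha}(M)$, and since $v$ is a nontrivial solution of a linear elliptic equation (obtained from the cokernel condition), Aronszajn's unique continuation theorem forbids $v$ from vanishing on any open set; the same holds for $u$, which is not identically $1$. Consequently the tensor built from $u$ and $v$ is nonzero on a dense open set, and choosing $\dot h$ to be a smooth bump function in normal coordinates times a constant symmetric tensor aligned with it at a suitable point produces the desired nonzero pairing. This step is the crux: it combines the explicit metric variation of the Laplacian with unique continuation.

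By the implicit function theorem, $\mathcal{Z}=F^{-1}(0)\cap\bigl((0,1)\times\mathscr{B}_\rho\times A\bigr)$ is a $C^1$ Banach submanifold, and the projection $\Pi:\mathcal{Z}\to(0,1)\times\mathscr{B}_\rho$ is a Fredholm map of the same index as $D_uF$, namely $0$; its critical points are precisely the degenerate solutions. Covering $A$ by countably many closed subsets on which $\Pi$ is proper (properness follows from standard a priori bounds on solutions whose parameters lie in compact subsets), the Sard--Smale theorem implies that the set of regular values of $\Pi$ is residual in $(0,1)\times\mathscr{B}_\rho$. This set coincides with $D$, which completes the argument after taking $\rho\leq\hat\rho$.
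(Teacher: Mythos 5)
First, a point of comparison: the paper does not prove Theorem \ref{teogen} at all --- it is quoted as a known result from \cite{MP09c} --- so there is no internal proof to measure your argument against. Your Sard--Smale scheme (rewrite the equation as $F(\eps,h,u)=u-T_{\eps,h}[(u^+)^{p-1}]=0$, note $D_uF$ is a compact perturbation of the identity, prove parametric transversality in the direction of the metric perturbation $h$, then project the zero set and invoke Sard--Smale on the separable spaces involved) is the standard route to this kind of genericity statement and is in the spirit of the cited reference, so the overall architecture is appropriate.

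There is, however, a genuine gap exactly at the crux you yourself single out. Writing $\langle D_hF(\eps,h,u)[\dot h],v\rangle=\int_M\langle \dot h,S\rangle\,d\mu_g$ with $S$ a symmetric $2$-tensor of the form $-\eps^{2}\,du\odot dv+\tfrac12\bigl(\eps^{2}\nabla u\cdot\nabla v+uv-(u^+)^{p-1}v\bigr)g$, what you must exclude is $S\equiv0$, and this does \emph{not} follow from the assertion that $u$ and $v$ do not vanish on open sets: $S$ involves the gradients and a zero-order combination and could conceivably vanish at points where neither function does, so ``the tensor is nonzero on a dense open set'' is unjustified as stated. The correct chain is: if the pairing vanishes for all $\dot h$ then $S\equiv0$ pointwise; taking the $g$-trace expresses the zero-order bracket through $\eps^{2}\nabla u\cdot\nabla v$, and substituting back shows $du\odot dv$ is pointwise proportional to $g$, which by a rank count (valid for all $n\ge2$) forces $du\odot dv\equiv0$, $\nabla u\cdot\nabla v\equiv0$ and hence $v\,(u-(u^+)^{p-1})\equiv0$; on the open set $\{v\neq0\}$ this makes $u$ locally constant equal to $0$ or $1$, and only now does Aronszajn's unique continuation enter --- applied to $u$ (or to $u-1$), it forces $u$ to be globally constant, contradicting $u$ nonconstant, so $v\equiv0$. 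This is also where the excision of the constant $1$ from $A$ is really used: at $u\equiv1$ one has $F(\eps,h,1)=0$ for every $(\eps,h)$, hence $D_hF=D_\eps F=0$ there, and transversality genuinely fails whenever $(p-2)/\eps^{2}$ lies in the spectrum of $-\Delta_{g_0+h}$; your sketch never makes explicit how the exclusion of $1$ rescues the argument (the other constant, $u\equiv0$, is harmless since the linearization there is the identity because $p>2$). Two smaller repairs: identify the cokernel of $D_uF$ with the kernel of the linearized equation $-\eps^{2}\Delta_{g_0+h}v+v=(p-1)(u^+)^{p-2}v$ using the inner product in which $T_{\eps,h}$ is self-adjoint (the $\eps$-scaled $H^1_{g_0+h}$ product, not the $H^1_{g_0}$ one), since that is what makes $v$ an honest solution of a linear elliptic equation; and note that separability of $H^1$ and $\mathscr{S}^k$ already yields residuality of the regular values of $\Pi$, the countable properness decomposition being needed only if you want open-and-dense pieces.
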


\section{The main ingredient of the proof}\label{sec3}

Let us sketch the proof of our main result. We are going to find an estimate of the number 
of nonconstant critical points of the functional $J_{\eps,g_0+h}$ with energy close to 
$m_\infty$, with respect to the parameters $(\eps,h)\in (0,\tilde\eps)\times \mathscr{B}_{\tilde\rho}$.

First of all we apply Theorem \ref{teogen} choosing the positive numbers $\tilde\eps,\tilde\rho$ 
small enough and the open bounded set $A$ equal to $I(0,\alpha)\smallsetminus \{1\}$, 
where $\alpha$ is given by Lemma \ref{lemma3}. So we get that the set 
\begin{equation*}
\begin{split}
&D(\tilde\eps,\tilde\rho)=\left\{
\begin{array}{l} 
(\eps,h)\in(0,\tilde\eps)\times\mathscr{B}_{\tilde\rho}:
\text{ any }u\in I(0,\alpha)\smallsetminus\{1\} 
\text{ solution of}\\
-\eps^2\Delta_{g_0+h}u+u=(u^+)^{p-1}
\text{ is non degenerate } 
\end{array}
\right\}\supset\\
&\supset
\left\{
\begin{array}{l} 
(\eps,h)\in(0,\tilde\eps)\times\mathscr{B}_{\tilde\rho}:
\text{any solutions of }
-\eps^2\Delta_{g_0+h}u+u=(u^+)^{p-1}\\
\text{ nonconstant, such that } J_{\eps,g_0+h}(u)<2m_\infty
\text{ is non degenerate } 
\end{array}
\right\}
\end{split}
\end{equation*}
is an residual subset in $(0,\tilde\eps)\times\mathscr{B}_{\tilde\rho}$, for $\tilde\eps$ and 
$\tilde \rho$ small enough.

Since $\displaystyle \lim_{(\eps,h)\to(0,0)}m_{\eps,g_0+h}=m_\infty$ (see following 
Remark \ref{remark5-9}), given $\delta\in(0,m_\infty/4)$, for $(\eps,h)\in \R^+\times\mathscr{S}^k$ 
small enough, we have $$0<m_\infty-\delta<m_{\eps,g_0+h}<m_\infty+\delta<2m_\infty.$$ 
Thus $m_\infty-\delta$ is not a critical value of $J_{\eps,g_0+h}$. 

By 
the compactness of $M$, it holds Palais Smale condition for the functional $J_{\eps,g_0+h}$. 
At this point we take $(\eps,h)\in D(\tilde\eps,\tilde\rho)$ with the positive numbers 
$\tilde\eps$, $\tilde\rho$ small enough. Thus we have that the critical points $u$ of 
$J_{\eps,g_0+h}$ with $J_{\eps,g_0+h}(u)<2m_\infty$ are in a finite number, then we can assume 
that $m_\infty+\delta$ is not a critical value for $J_{\eps,g_0+h}$. 
It holds the following relation proved in $\cite{Ben95, BC94}$ 
(see \cite[Lemma 5.2]{BC94})
\begin{equation}\label{eq3-1}
P_t( J_{\eps,g_0+h}^{m_\infty+\delta} ,J_{\eps,g_0+h}^{m_\infty-\delta})=
tP_t( J_{\eps,g_0+h}^{m_\infty+\delta}\cap  N_{\eps,g_0+h}).
\end{equation}
 
On the other hand, by proposition 
\ref{prop4-2} and \ref{prop5-10} and Lemma \ref{lemma5-7}, we can build two maps 
$\Phi_{\eps,g_0+h}$ and $\beta_{g_0+h}$ such that 
\begin{equation}
M\stackrel{\Phi_{\eps,g_0+h}}{\longrightarrow}
N_{\eps, g_0+h}\cap J_{\eps,g_0+h}^{m_\infty+\delta}
\stackrel{\beta_{g_0+h}}{\longrightarrow} M_{r(M)}, 
\end{equation}
where $\beta_{g_0+h}\circ\Phi_{\eps,g_0+h}$ is homotopic to the
identity map and $M_{r(M)}$ is homotopically equivalent to $M$. 
Therefore, by Remark \ref{BC} we have 
\begin{equation}\label{eq3-2}
P_t( J_{\eps,g_0+h}^{m_\infty+\delta}\cap  N_{\eps,g_0+h})=P_t(M)+Z(t)
\end{equation}
where $Z(t)$ is a polynomial with nonnegative integer coefficients.

Since the functional $J_{\eps,g_0+h}$ satisfies the Palais Smale condition and 
the critical points $u$ of $J_{\eps,g_0+h}$ 
such that  $J_{\eps,g_0+h}(u)<m_\infty+\delta$ 
are nondegenerate, by Morse theory we have
\begin{equation}\label{eq3-4}
\sum\limits_{u\in C}i_t(u)=
\sum\limits_{u\in C}t^{\mu(u)}=
P_t( J_{\eps,g_0+h}^{m_\infty+\delta},J_{\eps,g_0+h}^{m_\infty-\delta}).
\end{equation}
Here $\mu(u)$ is the dimension of the maximal subspace on which the bilinear form 
$J_{\eps,g_0+h}''(u)[\cdot,\cdot]$ is negative definite and the set $C$ is defined by 
\begin{equation}\label{eq3-3}
C=\{u\ :\ J'_{\eps,g_0+h}(u)=0\text{ and }
m_\infty-\delta <J_{\eps,g_0+h}(u)<m_\infty+\delta \}.
\end{equation}
Then by (\ref{eq3-1}), (\ref{eq3-4}) and (\ref{eq3-3}), for any $(\eps,h)\in D(\tilde\eps,\tilde\rho)$
with the positive numbers $\tilde\eps$, $\tilde\rho$ small enough, we get that the functional 
$J_{\eps,g_0+h}$ has at least $P_1(M)$ nonconstant critical points $u$ such that 
$J_{\eps,g_0+h}(u)<m_\infty+\delta<2m_\infty$.

\section{The function $\Phi_{\eps,g}$}\label{sec4}

Let us define a smooth real cut off function $\chi_R$ such that 
$\chi_R(t)=1$ if $0\leq t\leq R/2$,
$\chi_R(t)=0$ if $t\geq R$, and $|\chi'(t)|\leq 2/R$. Fixed $q\in M$ and $\eps>0$, we define 
on $M$ the function 
\begin{equation}\label{eq4-1}
w^g_{q,\eps}(x)=\left\{
\begin{array}{cc}
U_\eps(\exp_q^{-1}(x))\chi_R(|\exp_q^{-1}(x)|)&\text{if }x\in B_g(q,R);\\
0&\text{otherwise}.
\end{array}
\right.
\end{equation}
For any $u\in H^1_{g_0+h}(M)$ with $u^+\neq 0$ we define $t(u)\in \R$ as 
\begin{equation}
t^{p-2}(u)=\frac{\displaystyle\int_M (\eps^2|\nabla_{g_0+h}u|^2+u^2)d\mu_{g_0+h}}
{\displaystyle\int_M |u^+|^pd\mu_{g_0+h}},
\end{equation}
so $t(u)$ is the unique number such that $t(u)u\in N_{\eps,g_0+h}$.

Thus we can define a map $\Phi_{\eps,g}:M\to N_{\eps,g}$ by 
\begin{equation}\label{phieps}
\Phi_{\eps,g}(q)=t(w^g_{q,\eps})w^g_{q,\eps}.
\end{equation}
\begin{proposition}\label{prop4-2}
Given $g_0\in\mathscr{M}^k$, for any $\eps>0$ and for any 
$h\in \mathscr{B}_{\tilde\rho}\subset\mathscr{S}^k$ the operator 
$\Phi_{\eps,g_0+h}:M\to N_{\eps,g_0+h}$ is continuous. Moreover, given $g_0$, 
for any $\delta>0$ there exists $\eps_2=\eps_2(\delta)$ such that,  
if $\eps<\eps_2$, then 
\begin{eqnarray}
\Phi_{\eps,g_0+h}(q)\in N_{\eps,g_0+h}\cap J_{\eps,g_0+h}^{m_\infty+\delta} &&
\forall q\in M, \forall h\in \mathscr{B}_{\hat \rho}.
\end{eqnarray}
\end{proposition}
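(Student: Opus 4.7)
The plan is to prove continuity and the energy bound separately. For continuity of $\Phi_{\eps,g_0+h}$, I would write it as the composition $q \mapsto w^{g_0+h}_{q,\eps}$ followed by $u \mapsto t(u)u$. The first map is continuous from $M$ into $H^1_{g_0+h}(M)$ because, once $\hat\rho$ is small enough that the injectivity radius of $g_0+h$ is uniformly bounded below by some fixed $R_0 > R$, the exponential map $(q,\xi) \mapsto \exp_q \xi$ is jointly smooth, so $x \mapsto U_\eps(\exp_q^{-1}x)\chi_R(|\exp_q^{-1}x|)$ depends continuously on $q$ in the $H^1$-norm. The second map is continuous on $\{u \in H^1_{g_0+h}(M) : u^+ \neq 0\}$ since $t(u)$ is defined through continuous functionals of $u$, and by construction $w^{g_0+h}_{q,\eps} \geq 0$ with nontrivial positive part.

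For the energy bound, the key computation is to pass to $g$-normal coordinates at $q$ (with $g = g_0+h$) and rescale $\xi = \eps y$. Writing $g_{ij}(\xi)$ for the components of $g$ in these coordinates, so that $g_{ij}(0) = \delta_{ij}$ and $|g(0)|=1$, one obtains
\begin{align*}
|||w^g_{q,\eps}|||^2_{g,\eps} &= \int_{B(0,R/\eps)} \Bigl(g^{ij}(\eps y)\partial_i[U(y)\chi_R(\eps|y|)]\partial_j[U(y)\chi_R(\eps|y|)] \\
&\qquad\qquad\qquad + [U(y)\chi_R(\eps|y|)]^2\Bigr)\,|g(\eps y)|^{1/2}\,dy, \\
|w^{g,+}_{q,\eps}|^p_{p,g,\eps} &= \int_{B(0,R/\eps)} [U(y)\chi_R(\eps|y|)]^p\,|g(\eps y)|^{1/2}\,dy.
\end{align*}
Since $g^{ij}(\eps y) \to \delta^{ij}$ and $|g(\eps y)|^{1/2} \to 1$ pointwise as $\eps \to 0$, while $\chi_R(\eps|y|) \to 1$ for each fixed $y$, the exponential decay of $U$ and $\nabla U$ allows an application of dominated convergence to give $|||w^g_{q,\eps}|||^2_{g,\eps} \to \|U\|^2$ and $|w^{g,+}_{q,\eps}|^p_{p,g,\eps} \to |U|^p_p$. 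Using $-\Delta U + U = U^{p-1}$, which forces $\|U\|^2 = |U|^p_p$, one gets $t(w^g_{q,\eps}) \to 1$, and therefore
\begin{equation*}
J_{\eps,g}(\Phi_{\eps,g}(q)) = \Bigl(\tfrac12 - \tfrac1p\Bigr) t(w^g_{q,\eps})^2\,|||w^g_{q,\eps}|||^2_{g,\eps} \longrightarrow \Bigl(\tfrac12 - \tfrac1p\Bigr)\|U\|^2 = J_\infty(U) = m_\infty.
\end{equation*}

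To upgrade this to the uniform statement $J_{\eps,g_0+h}(\Phi_{\eps,g_0+h}(q)) < m_\infty + \delta$ for every $q \in M$ and every $h \in \mathscr{B}_{\hat\rho}$, the point is that the integrals above depend on $(q,h)$ only through the components $g_{ij}(\eps y)$, $g^{ij}(\eps y)$ and $|g(\eps y)|^{1/2}$ in $g$-normal coordinates. Compactness of $M$ together with the smallness of $\hat\rho$ and Remark \ref{rho1} yield $C^1$ bounds on these components that are uniform in $(q,h) \in M \times \mathscr{B}_{\hat\rho}$, so dominated convergence applies uniformly and produces a single $\eps_2(\delta)$ that works for all $(q,h)$. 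The main technical obstacle, and essentially the only nonroutine ingredient, is exactly this uniformity: one has to check that normal coordinates with respect to $g_0+h$ can be chosen on balls of a fixed radius $R$ independent of $(q,h)$, and that the Christoffel symbols and metric coefficients in these coordinates are uniformly bounded. This in turn rests on $g_0 \in \mathscr{M}^k$ with $k \geq 2$ and the smooth dependence of $\exp_q$ on the metric tensor.
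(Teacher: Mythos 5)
Your proof is correct and follows essentially the same route as the paper's: pass to normal coordinates for $g_0+h$ at $q$, rescale, and show that the rescaled norms converge to those of $U$ uniformly in $(q,h)$ via the uniform bounds on the metric coefficients supplied by the compactness of $M$ and the smallness of $\hat\rho$, whence $t(w^{g_0+h}_{q,\eps})\to 1$ and the energy tends to $m_\infty$. The only cosmetic difference is that you invoke dominated convergence using the exponential decay of $U$ where the paper splits the integral into $B(0,T)$ and its complement, which is the same truncation argument in different clothing.
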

\begin{proof}
It easy to prove the continuity of $q\to \Phi_{\eps,g_0+h}(q)\in N_{\eps,g_0+h}$ 
from $M$ to $H^1_{g_0+h}(M)$. 
To obtain the second statement we recall that
\begin{equation}
J_{\eps,g_0+h}(\Phi_{\eps,g_0+h}(w_{q,\eps}^{g_0+h}))=
\frac1{\eps^n}\left(\frac 12- \frac 1p\right)
[t(w_{q,\eps}^{g_0+h})]^p|w_{q,\eps}^{g_0+h}|^p_{p,g_0+h}.
\end{equation}
Moreover the following limits hold
\begin{eqnarray}
&&\lim_{\eps\to0} \frac 1{\eps^n}|w_{q,\eps}^{g_0+h}|^2_{2,g_0+h}=|U|^2_2\\
&&\lim_{\eps\to0} \frac 1{\eps^n}|w_{q,\eps}^{g_0+h}|^p_{p,g_0+h}=|U|^p_p\\
&&\lim_{\eps\to0} \frac {\eps^2}{\eps^n}|\nabla_{g_0+h}w_{q,\eps}^{g_0+h}|^2_{2,g_0+h}=
|\nabla U|^2_2
\end{eqnarray}
uniformly with respect to $q\in M$ and $h\in \mathscr{B}_\rho\subset \mathscr{S}^k$, $k\geq 2$.
We prove only the first limit, the others follow in a similar way. Using the normal coordinates
with respect to $g_0+h$ at the point $q\in M$ we have
\begin{equation}\label{eq29}
\begin{split}
\frac 1{\eps^n}|w_{q,\eps}^{g_0+h}|^2_{2,g_0+h}&-|U|^2_2=
\left|\int_{\R^n}U^2(z)[|\chi^2(\eps|z|)|\  |g_{0,q}(\eps z)+h_q(\eps z)|^{1/2}-1]dz\right|
\leq\\
\leq&\left|\int_{B(0,T)}U^2(z)[|\chi^2(\eps|z|)|\ |g_{0,q}(\eps z)+h_q(\eps z)|^{1/2}-1]dz\right|+\\
&+\left|
\int_{\R^n\smallsetminus B(0,T)}
U^2(z)[|\chi^2(\eps|z|)|\ |g_{0,q}(\eps z)+h_q(\eps z)|^{1/2}-1]dz\right|.
\end{split}
\end{equation}
We point out that
\begin{equation*}
\begin{split}
(g_{0,q}(0)+h_q(0))^{ij}&=\delta_{ij} \text{ and }\\
&\\
|(g_{0,q}(y)+h_q(y))^{ij}-\delta_{ij}|&=
|\nabla[(g_{0,q}({\theta y}))^{ij}]\cdot y
+\nabla[(h_q(\theta y))^{ij}]\cdot y|\leq\\
&\leq[|\nabla(g_{0,q}({\theta y}))^{ij}|
+|\nabla(h_q(\theta y))^{ij}|]\cdot |y|.
\end{split}
\end{equation*}
Because $M$ is compact, we have
$[|\nabla(g_{0,q}(\theta y))^{ij}|
+|\nabla(h_q(\theta y))^{ij}|]$ is bounded independently on $y\in B(0,r)$, 
$q\in M$, and $h\in \mathscr{B}_\rho$.

At this point it is clear that the second addendum of formula (\ref{eq29}) vanishes 
as $T\to+\infty$. Moreover, fixed $T$ large enough, the first addendum of
(\ref{eq29}) vanishes as $\eps \to0$.

By the previous limits we get that $\displaystyle \lim_{\eps\to0}t(w_{\eps,q}^{g_0+h})=1$ 
uniformly with respect to $q\in M$ and $h\in \mathscr{B}_\rho$. Then we have 
\begin{equation}
\lim_{\eps\to0}J_{\eps,g_0+h}\big(t(w_{\eps,q}^{g_0+h})w_{\eps,q}^{g_0+h}\big)=m_\infty
\end{equation}
uniformly with respect to $q\in M$ and $h\in \mathscr{B}_\rho$. 
\end{proof}

\begin{remark}\label{rem4-3}
By Proposition \ref{prop4-2} we get 
\begin{equation}\label{eq4-4}
\limsup_{\eps\to0} m_{\eps,g_0+h}\leq m_\infty,
\end{equation}
uniformly with respect to $h\in \mathscr{B}_\rho$. 
Here $\displaystyle m_{\eps,g}=\inf_{N_{\eps,g}}J_{\eps,g}$ and 
$\displaystyle m_\infty=\inf_{N_\infty}J_\infty$.
\end{remark}

\section{The operator $\beta_g$}\label{sec5}
For any function $u\in N_{\eps,g}$ we can define its centre of mass 
as a point $\beta_g(u)\in \R^N$ by 
\begin{equation}
\beta_g(u)=\frac{\displaystyle \int_Mx(u^+)^pd\mu_g}{\displaystyle \int_M(u^+)^pd\mu_g}.
\end{equation}
The function $\beta_g$ is well defined on $N_{\eps,g}$ since, if $u\in N_{\eps,g}$ then 
$u^+\neq0$. We will prove that, if $u\in N_{\eps,g}\cap J_{\eps,g}^{m_\infty+\delta}$,
then $\beta_g(u)\in M_{r(M)}$, using the concentration properties of the functions in 
$N_{\eps,g}\cap J_{\eps,g}^{m_\infty+\delta}$ as $\eps$ and $\delta$ are suitably small.
In the following we use the same arguments of Section 5 of [BBM], but here we have to take 
in account the dependence of the metric $g$, hence some calculations are different.

Our aim is to get the following statement
\begin{proposition}\label{prop5-10}
Given $g_0\in \mathscr{M}^k$, 
there exist $\delta_0$, $\rho_0$ and $\eps_0$ such that, for any $\delta\in(0,\delta_0)$,
for any $\eps\in(0,\eps_0)$, for any $h\in \mathscr{B}_{\rho_0}$, and for any 
$u\in N_{\eps,g_0+h}\cap J_{\eps,g_0+h}^{m_\infty+\delta}$,
it holds $\beta_{g_0+h}(u)\in M_{r(M)}$ where $r(M)$ is the radius of topological invariance of $M$
and $M_{r(M)}=\{x\in \R^N\ :\ d(x,M)<r(M)\}$.
\end{proposition}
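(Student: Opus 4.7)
I would argue by contradiction, combined with a concentration argument in normal coordinates.

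\textbf{Setup and contradiction.} Assume the claim fails. Then we can extract sequences $\eps_n\downarrow 0$, $\delta_n\downarrow 0$, $h_n\in\mathscr{B}_{\hat\rho}$ with $\|h_n\|_k\to 0$, and $u_n\in N_{\eps_n,g_n}\cap J_{\eps_n,g_n}^{m_\infty+\delta_n}$ (writing $g_n=g_0+h_n$) such that $\beta_{g_n}(u_n)\notin M_{r(M)}$. By Remark \ref{rem4-3} and the upper bound $J_{\eps_n,g_n}(u_n)\leq m_\infty+\delta_n$, we have $J_{\eps_n,g_n}(u_n)\to m_\infty$, and the Nehari identity $J_{\eps_n,g_n}(u_n)=(\tfrac12-\tfrac1p)|u_n^+|^p_{p,g_n,\eps_n}$ gives $|u_n^+|^p_{p,g_n,\eps_n}\to \tfrac{p}{p-2}\cdot\tfrac{2p\,m_\infty}{p-2}$.

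\textbf{Concentration at a single point.} The next step is to show that there exist points $q_n\in M$ such that essentially all the mass of $u_n^+$, measured in the norm $|\cdot|_{p,g_n,\eps_n}^p$, concentrates in a geodesic ball $B_{g_n}(q_n,R\eps_n)$ as $n\to\infty$. I would follow the BBM blueprint: cover $M$ by finitely many geodesic balls of a fixed small radius, use a subordinate partition of unity $\{\chi_\alpha\}$, and split $|u_n^+|^p_{p,g_n,\eps_n}=\sum_\alpha \int_M \chi_\alpha (u_n^+)^p$. If the mass distributed over two essentially disjoint regions, one could rescale $u_n$ around two distinct sequences of points by $y\mapsto u_n(\exp_{q_n^{(i)}}^{g_n}(\eps_n y))$; the pullback metric tends to the Euclidean one uniformly (because $g_n\to g_0$ in $C^k$ and normal coordinates flatten $g_0$ to first order), and the rescaled equation becomes the limit equation $-\Delta v+v=(v^+)^{p-1}$ on $\R^n$. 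Each concentrating bubble would contribute at least $m_\infty$ to the energy, forcing $J_{\eps_n,g_n}(u_n)\geq 2m_\infty+o(1)$, contradicting $J_{\eps_n,g_n}(u_n)<2m_\infty$. So only one bubble survives, and after a shift in $\R^n$ the rescaled profile $\tilde u_n(y):=u_n(\exp_{q_n}^{g_n}(\eps_n y))$ converges strongly in $H^1(\R^n)$ to the ground state $U$.

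\textbf{Center of mass close to $M$.} Fix such $q_n\in M$. In the chart $\exp_{q_n}^{g_n}$ (identifying $T_{q_n}M\cong\R^n$ and using a smooth Nash embedding $\iota:M\hookrightarrow\R^N$), a change of variables gives, up to negligible tail contributions,
\begin{equation*}
\beta_{g_n}(u_n)=\iota(q_n)+\eps_n\,\frac{\displaystyle\int_{\R^n}d\iota_{q_n}[y]\,\tilde u_n^p(y)\,|g_n|_{q_n}^{1/2}(\eps_n y)\,dy}{\displaystyle\int_{\R^n}\tilde u_n^p(y)\,|g_n|_{q_n}^{1/2}(\eps_n y)\,dy}+o(\eps_n).
\end{equation*}
Because $\tilde u_n\to U$ strongly in $H^1(\R^n)\cap L^p(\R^n)$ and $U$ is radial, the ratio tends to $0$. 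Hence $\dist(\beta_{g_n}(u_n),\iota(M))=o(1)$, so $\beta_{g_n}(u_n)\in M_{r(M)}$ for all large $n$, contradicting the assumption. Choosing $\eps_0,\delta_0,\rho_0$ smaller than the thresholds coming out of this contradiction yields the uniform conclusion.

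\textbf{Main obstacle.} The technical heart is the single-bubble concentration in Step 2, where one must rule out splitting uniformly in $h\in\mathscr{B}_{\rho_0}$: the rescaling, the comparison with $m_\infty$, and the passage to the Euclidean limit problem must all be carried out with the metric varying along the sequence. Controlling the error terms coming from $|g_n|^{1/2}(\eps_n y)-1$ uniformly in $q_n\in M$ and $h_n\in\mathscr{B}_{\rho_0}$, as already exhibited on a smaller scale in the proof of Proposition \ref{prop4-2}, is what makes this step more delicate than its fixed-metric counterpart in \cite{BBM07}.
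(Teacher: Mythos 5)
Your strategy is broadly in the same spirit as the paper's (contradiction, rescaling at scale $\eps$ around a concentration point, passage to the Euclidean limit equation), but you aim for a substantially stronger concentration statement than the one the paper actually proves, and that gap is where the real work was left unresolved.

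The paper's route is more economical. It first isolates Proposition~\ref{prop5-5}, which says only that for any $\eta\in(0,1)$ one can find a point $q=q(u)$ such that a fraction $\geq (1-\eta)$ of the quantity $(\tfrac12-\tfrac1p)\frac1{\eps^n}\int(u^+)^p\,d\mu$ lives in the \emph{fixed-size} ball $B_{g_0+h}(q,r(M)/2)$. That is a much weaker statement than single-bubble strong $H^1(\R^n)$ convergence of the rescaled profile to $U$: it does not assert uniqueness of the limit, radial symmetry, or that no mass escapes to a shell between scale $\eps$ and scale $r(M)/2$. Once Proposition~\ref{prop5-5} is in hand, the proof of Proposition~\ref{prop5-10} is an elementary triangle inequality on the barycenter: splitting $\int_M(x-q)(u^+)^p\,d\mu$ over $B_{g_0+h}(q,r(M)/2)$ and its complement gives
\begin{equation*}
|\beta_g(u)-q|\leq \frac{r(M)}{2}+2D\left(1-\frac{(1-\eta)m_\infty}{m_\infty+\delta}\right),
\end{equation*}
which is $<r(M)$ once $\eta,\delta$ are small. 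No asymptotics in $\eps$, no expansion of $\exp_q$, and no identification of the limit with $U$ are needed.

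Your proposal instead requires two claims you do not establish. First, that the rescaled profile $\tilde u_n(y)=u_n(\exp_{q_n}(\eps_n y))$ converges \emph{strongly in $H^1(\R^n)$ to $U$}: the paper's Lemmas~\ref{lemma5-6} and \ref{lemma5-7} only give weak $H^1$ and strong $L^p_{\mathrm{loc}}$ convergence to a nontrivial nonnegative solution $w$ of the limit equation; upgrading this to global strong convergence to the specific ground state requires ruling out both vanishing and dichotomy uniformly in $h$, and then invoking uniqueness of $U$, none of which is a one-liner. Second, your barycenter formula is written ``up to negligible tail contributions,'' but the bound on those tails \emph{is} the content of Proposition~\ref{prop5-5}: the contribution from $M\smallsetminus B_g(q_n,R)$ involves $|x-q_n|$ of order $D$, not of order $\eps_n$, so without the fixed-scale mass estimate the error term cannot be absorbed into $o(\eps_n)$. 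In short, you are implicitly assuming exactly the lemma the paper proves, and then doing additional work (the $\eps$-scale expansion, the radiality of $U$) that the paper's simpler route makes unnecessary. If you want to pursue your version, the honest statement is that Steps 2 and 3 each need a proof roughly as long as the paper's proof of Proposition~\ref{prop5-5}, plus a uniqueness/strong-convergence argument the paper never needs.
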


To prove Proposition \ref{prop5-10} we need some technical results. 

First of all we consider partitions of the compact manifold $M$. Given $\eps>0$ and a metric 
$g_0$, a partition $\mathscr{P}=\mathscr{P}(\eps)=\{P_j=P_J(\eps)\}_{j\in\Lambda(\eps)}$
is called a ``good'' partition if:
\begin{itemize}
\item for any $j\in\Lambda(\eps)$ the set $P_j$ is closed
\item $P_j\cap P_i\subset \partial  P_j\cap \partial P_i$ for $i\neq j$
\item there exists $\rho>0$ such that, for any $j$, there exists $q_j\in P_j$ such that 
$B_g(q_j,\eps)	\subset P_j\subset B_g(q_j,(1+1/a)\eps)$ with a constant $a$ 
independent on $\eps$ and $g=g_0+h$ with $h\in \mathscr{B}_\rho$
\item any point $x\in M$ is contained in at most $\nu_M$ balls 
$B_g(q_j,(1+1/a)\eps)$ where $\nu_M$ does not depend on $\eps$ and $g$, 
$g=g_0+h$ and $h\in \mathscr{B}_\rho$.
\end{itemize}

\begin{lemma}\label{lemma5-3}
There exist $\gamma>0$ and $\rho>0$ such that, for any $\delta>0$ and $\eps>0$, given any 
``good'' partition $\mathscr{P}(\eps)$ 
and any $u\in N_{\eps,g}\cap J_{\eps,g}^{m_\infty+\delta}$, 
where $g=g_0+h$ with $h\in \mathscr{B}_{\rho}$, there exists a set 
$P\in \mathscr{P}(\eps)$ such that 
\begin{equation}
\frac 1{\eps^n}\int_P(u^+)^pd\mu_{g_0+h}\geq\gamma, \text{ with }h\in \mathscr{B}_{\rho}
\end{equation}
\end{lemma}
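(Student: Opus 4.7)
The plan is to argue by contradiction. Assume $A_j := \frac{1}{\eps^n}\int_{P_j}(u^+)^p\,d\mu_g < \gamma$ for every $P_j \in \mathscr{P}(\eps)$, and set $B_j := \frac{1}{\eps^n}\int_{P_j}(\eps^2|\nabla_g u|^2 + u^2)\,d\mu_g$. Since the pieces of the partition have pairwise disjoint interiors and cover $M$, and since $u$ lies on $N_{\eps,g}$, one has $\sum_j A_j = \sum_j B_j = |||u|||_{\eps,g}^2 > 0$. The aim is to extract from the smallness of each $A_j$ a global inequality that contradicts this identity.

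To do so, I would localize and rescale. On each $P_j \subset B_g(q_j,(1+1/a)\eps)$, pass to $g$-normal coordinates at $q_j$ and then rescale by $\eps$, setting $\tilde v_j(y) = u(\exp_{q_j}^g(\eps y))$ for $y \in B(0,1+1/a) \subset \R^n$. Because $g_0$ is smooth, $M$ is compact, and $h \in \mathscr{B}_\rho$ is small, the components of $g = g_0 + h$ in these normal coordinates are uniformly close to $\delta_{ij}$ on $B(0,(1+1/a)\eps)$. Hence the $L^p$- and $H^1$-norms of $\tilde v_j$ on the fixed ball $B(0,1+1/a)$ are comparable, up to universal multiplicative constants, to $A_j^{1/p}$ and $B_j^{1/2}$ respectively. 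The Sobolev embedding $H^1(B(0,1+1/a)) \hookrightarrow L^p(B(0,1+1/a))$ (valid since $p<2^*$) then yields the single key inequality
\[A_j^{2/p} \leq C\, B_j,\]
with a constant $C$ independent of $j$, $\eps$, $u$, and $h$.

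To finish, combine this with $A_j < \gamma$: writing $A_j = A_j^{(p-2)/p}\cdot A_j^{2/p} \leq \gamma^{(p-2)/p} A_j^{2/p} \leq C\gamma^{(p-2)/p} B_j$ and summing over $j$ gives $\sum_j A_j \leq C\gamma^{(p-2)/p} \sum_j B_j$. Since $\sum_j A_j = \sum_j B_j > 0$, dividing yields $1 \leq C\gamma^{(p-2)/p}$, which is impossible once $\gamma$ is fixed below $C^{-p/(p-2)}$. Note that the energy bound $J_{\eps,g}(u) \leq m_\infty+\delta$ plays no direct role in this step; only $u \in N_{\eps,g}$ is used (so the lemma is really a statement about the Nehari manifold). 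The main technical obstacle I expect is verifying that every constant appearing --- in the coordinate-plus-rescaling change of variables, in the comparison $\|\tilde v_j\|_{H^1}^2 \sim B_j$, and in the Sobolev constant on the model ball --- is genuinely uniform in $j \in \Lambda(\eps)$, $\eps$ small, and $h \in \mathscr{B}_\rho$. This should follow from the compactness of $M$, the smoothness of $g_0$, and the $H^1$-norm equivalence recorded in the discussion around $\rho_1$, together with the analogous uniform comparison of the Riemannian volume forms; but it must be tracked carefully so that no constant degenerates as $\eps\to 0$.
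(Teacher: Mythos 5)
Your strategy is the one the paper itself relies on: it does not reprove the lemma but defers to \cite[Lemma 5.3]{BBM07}, whose argument is exactly your H\"older splitting $\sum_j A_j \le (\max_j A_j)^{(p-2)/p}\sum_j A_j^{2/p}$ combined with a rescaled local Sobolev inequality; your remark that only the Nehari constraint (not the energy bound) is used is also correct. However, one step is wrong as written and would fail: the inequality $A_j^{2/p}\le C\,B_j$ with \emph{both} sides taken over $P_j$. The Sobolev embedding is applied on the model ball $B(0,1+1/a)$, so what it actually yields is
\begin{equation*}
A_j^{2/p}\ \le\ \left(\frac1{\eps^n}\int_{B_g(q_j,(1+1/a)\eps)}(u^+)^p\,d\mu_g\right)^{2/p}\ \le\ C\,\tilde B_j,\qquad \tilde B_j:=\frac1{\eps^n}\int_{B_g(q_j,(1+1/a)\eps)}\big(\eps^2|\nabla_g u|^2+u^2\big)\,d\mu_g,
\end{equation*}
and $\tilde B_j$ is \emph{not} comparable to $B_j$: the energy of $u$ on the enlarged ball may live almost entirely in $B_g(q_j,(1+1/a)\eps)\smallsetminus P_j$, so your claimed two-sided comparability of $\|\tilde v_j\|_{H^1(B(0,1+1/a))}$ with $B_j^{1/2}$ is false in general (only the one-sided bounds $A_j\le \|\tilde v_j^+\|_{L^p}^p$ up to constants and $\|\tilde v_j\|_{H^1}^2\le C\tilde B_j$ hold, which is all you need). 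The repair is precisely the fourth property of a ``good'' partition, which you never invoke: each point of $M$ lies in at most $\nu_M$ of the balls $B_g(q_j,(1+1/a)\eps)$, hence $\sum_j\tilde B_j\le \nu_M\,|||u|||^2_{\eps,g}=\nu_M\sum_j B_j$. With that, your final estimate becomes $\sum_j A_j\le C\nu_M\gamma^{(p-2)/p}\sum_j B_j$, and the contradiction goes through upon choosing $\gamma<(C\nu_M)^{-p/(p-2)}$. The uniformity of all constants in $j$, $\eps$ and $h\in\mathscr{B}_\rho$ is, as you note, the remaining bookkeeping, and is exactly the point the paper emphasizes when citing \cite{BBM07}.
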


The proof of this Lemma can be obtained following the same argument of the proof of 
\cite[Lemma 5.3]{BBM07}. Indeed, every constant appearing in 
\cite[Lemma 5.3]{BBM07} can be chosen independently on $h\in \mathscr{B}_{\rho}$ with 
$\rho$ small enough.

\begin{proposition}\label{prop5-5}
For any $\eta\in(0,1)$ there exist $\delta_0$, $\rho_0$ and $\eps_0$ such that, for any $\delta<\delta_0$, 
for any $\eps<\eps_0$, for any $h\in\mathscr{B}_{\rho_0} $ and 
for any $u\in N_{\eps,g_0+h}\cap J_{\eps,g_0+h}^{m_\infty+\delta}$, there exists $q=q(u)$ for that
\begin{equation}
\left(\frac12-\frac1p\right)\frac1{\eps^n}
\int_{B_{g_0+h}(q,r(M)/2)}(u^+)^pd\mu_{g_0+h}>(1-\eta)m_\infty
\end{equation}
\end{proposition}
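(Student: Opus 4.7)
The plan is a contradiction argument based on $\eps$-rescaling and concentration--compactness, following the scheme of \cite{BBM07} but keeping track of the metric parameter $h$. Suppose the claim fails for some $\eta\in(0,1)$: there exist sequences $\delta_n,\eps_n,\rho_n\to 0^+$, $h_n\in\mathscr{B}_{\rho_n}$ and $u_n\in N_{\eps_n,g_n}\cap J_{\eps_n,g_n}^{m_\infty+\delta_n}$, with $g_n:=g_0+h_n$, such that
\[
\Bigl(\tfrac12-\tfrac 1p\Bigr)\frac 1{\eps_n^n}\int_{B_{g_n}(q,r(M)/2)}(u_n^+)^p\,d\mu_{g_n}\le(1-\eta)m_\infty\qquad\text{for every }q\in M.
\]
The Nehari identity gives $J_{\eps_n,g_n}(u_n)=(\tfrac12-\tfrac 1p)\tfrac 1{\eps_n^n}|u_n|^p_{p,g_n}$, and since $m_{\eps_n,g_n}\to m_\infty$ by Remark \ref{remark5-9} we obtain $J_{\eps_n,g_n}(u_n)\to m_\infty$; hence the total rescaled $L^p$-mass of $u_n^+$ tends to $|U|^p_p=\tfrac{2p}{p-2}m_\infty$, and the standing assumption asserts that no geodesic ball of radius $r(M)/2$ absorbs more than the fraction $1-\eta$ of this limit mass.

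Next, I apply Lemma \ref{lemma5-3} to select, for each $n$, a cell of a good partition centred at a point $q_n\in M$ with rescaled $L^p$-mass at least $\gamma$. Rescale in $g_n$-normal coordinates by $\eps_n$, setting $v_n(y):=u_n(\exp^{g_n}_{q_n}(\eps_n y))$ on $B(0,R/\eps_n)$ and extending by zero. Because $h_n\to 0$ in $\mathscr{S}^k$ and $(g_0)_{ij}(0)=\delta_{ij}$ in $g_0$-normal coordinates at any base point, the components $(g_n)^{ij}(\eps_n\cdot)$ and volume density $|g_n(\eps_n\cdot)|^{1/2}$ converge to $\delta^{ij}$ and $1$ locally uniformly on $\R^n$. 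The change of variables then identifies $|||u_n|||_{\eps_n,g_n}^2$ and $\tfrac 1{\eps_n^n}|u_n|^p_{p,g_n}$ with the Euclidean $\|v_n\|_{H^1(\R^n)}^2$ and $|v_n|_p^p$ up to factors $1+o(1)$; consequently $v_n$ is bounded in $H^1(\R^n)$ and $\int_{B(0,1+1/a)}(v_n^+)^p\,dy\ge\gamma-o(1)$. Passing to a subsequence, $v_n\rightharpoonup v$ weakly in $H^1(\R^n)$ and $v_n\to v$ strongly in $L^p_{\mathrm{loc}}(\R^n)$, with $v\ge 0$ and $v\not\equiv 0$.

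The core step is to upgrade this to strong $L^p(\R^n)$ convergence with $v$ a translate of the ground state $U$. Since $u_n$ is only a near-minimiser on the Nehari manifold, not a critical point, I would use the Brezis--Lieb decompositions
\[
\|v_n\|^2_{H^1(\R^n)}=\|v\|^2+\|v_n-v\|^2+o(1),\qquad |v_n|^p_p=|v|^p_p+|v_n-v|^p_p+o(1),
\]
combine them with the Nehari identity $\|v_n\|^2-|v_n|^p_p=o(1)$, and project $v$ and the residual $w_n:=v_n-v$ onto $N_\infty$ via their Nehari constants. The fact that every nontrivial element of $N_\infty$ has $J_\infty$-energy at least $m_\infty$, together with the total energy tending to $m_\infty$, forces $w_n\to 0$ in $L^p(\R^n)$ and $v\in N_\infty$ with $J_\infty(v)=m_\infty$, so $v$ is a translate of $U$. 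Strong $L^p$ convergence then yields $T=T(\eta)$ with $\int_{B(0,T)}v^p\,dy\ge(1-\eta/2)|U|^p_p$; for $\eps_n$ small the Euclidean ball $B(0,T)$ is contained in the pull-back of $B_{g_n}(q_n,r(M)/2)$ (by Remark \ref{rho1}) and the volume density is close to $1$ there, so $(\tfrac12-\tfrac 1p)\tfrac 1{\eps_n^n}\int_{B_{g_n}(q_n,r(M)/2)}(u_n^+)^p\,d\mu_{g_n}\to m_\infty>(1-\eta)m_\infty$, contradicting the hypothesis. The main obstacle is precisely this dichotomy step: because $u_n$ is not a solution, the exclusion of energy leakage at infinity must rely entirely on the Brezis--Lieb identities together with the additive lower bound $m_\infty$ on $N_\infty$, with no elliptic equation for $v_n$ to fall back on.
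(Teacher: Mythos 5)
Your overall contradiction skeleton (negate, rescale at a concentration point from Lemma~\ref{lemma5-3}, extract a weak/$L^p_{\mathrm{loc}}$ limit $v\neq 0$, derive a lower bound on the mass in a fixed Euclidean ball) matches the paper, but the way you obtain the crucial lower bound is genuinely different, and as sketched it has a gap.

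The paper does \emph{not} try to prove global $L^p(\R^n)$ convergence or that $v$ is a translate of $U$. Its decisive move is to invoke Ekeland's variational principle (\cite{deF89}) to replace the near-minimiser $u_k$ by one satisfying $|J'_{\eps_k,g_k}(u_k)(\xi)|\le\sqrt{\delta_k}\,|||\xi|||_{\eps_k}$. This almost-criticality is precisely what lets Lemma~\ref{lemma5-7} pass to the limit in the Euler--Lagrange identity tested against compactly supported $\hat\varphi_k$, yielding that the rescaled weak limit $w$ is a \emph{nontrivial solution} of $-\Delta w+w=|w|^{p-2}w$. Nontrivial solutions lie on $N_\infty$, so $\bigl(\tfrac12-\tfrac1p\bigr)|w|^p_p\ge m_\infty$; fixing $T$ large and using only strong $L^p_{\mathrm{loc}}$ convergence then contradicts the assumed bound \eqref{eq5-8}. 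This is entirely local: it never needs to control the rescaled mass of $u_n$ far from $q_n$.

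Your route drops Ekeland entirely and replaces the elliptic equation for the limit by a Brezis--Lieb/Nehari dichotomy for a minimising sequence. The trouble is that the quantities you feed into the dichotomy are not what you claim. After the compulsory cutoff $\chi_R$ (required for $v_n$ to belong to $H^1(\R^n)$; restriction-plus-zero extension does not give an $H^1$ function), $\|v_n\|^2_{H^1}$ and $|v_n^+|^p_p$ only capture the part of $|||u_n|||^2_{\eps_n,g_n}$ and $|u_n^+|^p_{p,g_n,\eps_n}$ lying in $B_{g_n}(q_n,R)$. The exact Nehari identity holds for $u_n$ on all of $M$, and $J_{\eps_n,g_n}(u_n)\to m_\infty$ controls the \emph{total} rescaled mass, but neither gives you $\|v_n\|^2-|v_n^+|^p_p=o(1)$ nor $\|v_n\|^2\to \|U\|^2$ unless you first rule out that a nontrivial fraction of the mass sits outside $B_{g_n}(q_n,R)$ — which is exactly what the dichotomy is supposed to produce, not what it can assume. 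A secondary issue is that the Nehari functional here involves $(u^+)^p$, so the Brezis--Lieb splitting you need is for $|v_n^+|^p_p$, and $v_n^+-v^+$ is not $(v_n-v)^+$; this has to be handled before one can ``project $v$ and $w_n$ onto $N_\infty$'' separately. Both difficulties are avoidable (this is, after all, a standard concentration--compactness argument), but your sketch does not address them, and they are precisely the points where Ekeland plus the limit equation let the paper sidestep any global mass accounting. If you want to keep the Brezis--Lieb route, you should either perform the rescaling without cutting off (working in $H^1_{g_n}(M)$ and transporting Brezis--Lieb there) or carry along the mass outside $B_{g_n}(q_n,R)$ as an explicit remainder term and show that it vanishes along the subsequence.
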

\begin{proof}
We only prove the proposition for any $u \in N_{\eps,g_0+h}\cap 
J_{\eps,g_0+h}^{m_{\eps,g_0+h}+2\delta}$.
Indeed, by this result and by Remark \ref{rem4-3} we get  
\begin{equation}
\lim_{(\eps,h)\to(0,0)}m_{\eps,g_0+h}=m_\infty.
\end{equation}
Hence it holds
$J_{\eps,g_0+h}^{m_\infty+\delta}\subset J_{\eps,g_0+h}^{m_{\eps,g_0+h}+2\delta}$ for 
$\delta$, $\eps$ and $||h||_k$ small enough. So the thesis holds. 

We argue by contradiction. Suppose that there exists $\eta\in (0,1)$ such that we can find 
sequences of vanishing numbers $\{\delta_k\}_k$, 
 $\{\eps_k\}_k$, a sequence $h_k\to0$ in  $\mathscr{S}^k$
and  a sequence $u_k\in  N_{\eps_k,g_0+h_k}\cap J_{\eps_k,g_0+h_k}^{m_{\eps,g_0+h_k}+2\delta_k}$
such that, for all $q\in M$, 
\begin{equation}\label{eq5-8}
\left(\frac12-\frac1p\right)\frac1{\eps_k^n}
\int_{{B}_{g_0+h_k}(q,r(M)/2)}(u_k^+)^pd\mu_{g_0+h_k}\leq(1-\eta)m_\infty.
\end{equation}
By Ekeland variational principle (see \cite{deF89}), 
and by definition of the Nehari manifold, we can assume that
\begin{equation}\label{eq5-9}
|J'_{\eps_k,g_0+h_k}(u_k)(\xi)|\leq\sqrt{\delta_k}|||\xi|||_{\eps_k}\ \ \forall \xi\in H^1_{g_0+h_k}(M).
\end{equation}
By Lemma \ref{lemma5-3} there exists a set $P_k\in \mathscr{P}_{\eps_k}$ such that 
\begin{equation}\label{eq5-10}
\frac 1{\eps_k^n}\int_{P_k}(u_k^+)^pd\mu_{g_0+h_k}\geq\gamma.
\end{equation}
We choose a point $q_k$ interior to $P_k$ and we consider the function 
$w_k:\R^n\to\R$ defined by 
\begin{equation}
u_k(x)\chi_R(\exp_{q_k}^{-1}(x))=u_k(\exp_{q_k}(\eps_kz)\chi_R(\eps_k|z|)=w_k(z)
\end{equation}
where $x\in M$ and $\chi_R$ is a smooth cut off function $\chi_R(t)\equiv1$ 
for $0<t<R/2$, $\chi_R(t)\equiv0$ for $t>R$ and $R$ small enough. It easily follows that 
$w_k\in H^1_0(B(0,R/\eps_k))\subset H^1(\R^n)$.

We now establish some properties of the functions $w_k$ by some lemmas. The proof of these lemmas 
are in Section \ref{sei}

\begin{lemma}\label{lemma5-6}
By considering a subsequence, there exists $w\in H^1(\R^n)$ such that $\lim_k w_k=w$ as a 
weak limit in $H^1(\R^n)$ and a strong limit in $L^p_{\text{loc}}(\R^n)$.
\end{lemma}

\begin{lemma}\label{lemma5-7}
The limit function $w\in H^1(\R^n)$ is a weak solution of
\begin{equation}
-\Delta w+w=|w|^{p-2}w,\ \ w>0
\end{equation}
\end{lemma}

At this point we observe that, by definition of $w_k$ and by (\ref{eq5-8}), 
for any $\sigma\in(0,\eta)$, and for any $T>0$ we have, for $k$ large enough,
\begin{displaymath}
\left(\frac12-\frac1p\right)
\int_{B(0,T)}|w_k^+(x)|^pdx<\frac{1-\eta}{1-\sigma}m_\infty.
\end{displaymath}
On the other hand by Lemma \ref{lemma5-7} we have 
\begin{displaymath}
\left(\frac12-\frac1p\right)
||w||^2=\left(\frac12-\frac1p\right)|w|^p_p\geq m_\infty.
\end{displaymath}
By Lemma \ref{lemma5-6}, for $T$ and $k$ large enough
\begin{displaymath}
\left(\frac12-\frac1p\right)
\int_{B(0,T)}|w_k^+(x)|^pdx>\frac{1-\eta}{1-\sigma}m_\infty,
\end{displaymath}
and this leads to a contradiction.
\end{proof}

\begin{remark}\label{remark5-9}
By Proposition \ref{prop5-5} and by Remark \ref{rem4-3}, it holds
$$\displaystyle\lim_{(\eps,h)\to(0,0)}m_{\eps,g_0+h}=m_\infty$$ uniformly with respect to 
$h\in\mathscr{B}_\rho$ (here $\rho$ is given as in Proposition \ref{prop5-5})
\end{remark}

\begin{proof}[Proof of Proposition \ref{prop5-10}]
By Proposition \ref{prop5-5} for any $\eta\in(0,1)$ and for any 
$u\in N_{\eps,g_0+h}\cap J_{\eps,g_0+h}^{m_\infty+\delta}$ with $\eps$, $\delta$, and $h$ 
small enough, there exists $q\in M$ such that 
\begin{equation}\label{eq48}
(1-\eta)m_\infty<\left(\frac 12 -\frac 1p \right)\frac 1{\eps^n}
\int_{B_{g_0+h}(q,r(M)/2)}(u^+)^pd\mu_{g_0+h}.
\end{equation}
Moreover, since $u \in N_{\eps,g_0+h}\cap J_{\eps,g_0+h}^{m_\infty+\delta}$, it holds
\begin{equation}\label{eq49}
\left(\frac 12 -\frac 1p \right)\frac 1{\eps^n}
\int_M(u^+)^pd\mu_{g_0+h}<m_\infty+\delta.
\end{equation}
Then, by (\ref{eq48}) and (\ref{eq49}), we have
\begin{equation}
\begin{split}
|\beta_g(u)-q|\leq& 
\left|\frac{\int_M (x-q)(u^+(x))^pd\mu_{g_0+h}}{\int_M (u^+(x))^pd\mu_{g_0+h}}\right|\leq\\
\leq&
\left|\frac{\int_{B_{g_0+h}(q,r(M)/2)} (x-q)(u^+(x))^pd\mu_{g_0+h}}{\int_M (u^+(x))^pd\mu_{g_0+h}}\right|+\\
&+
\left|\frac{\int_{M\smallsetminus B_{g_0+h}(q,r(M)/2)} (x-q)(u^+(x))^pd\mu_{g_0+h}}
{\int_M (u^+(x))^pd\mu_{g_0+h}}\right|\leq\\
\leq& \frac{r(M)}2 +2D\left(1-\frac{(1-\eta)m_\infty}{m_\infty+\delta}\right)
\end{split}
\end{equation}
where $D$ is the diameter of $M$ as a compact subset of $\R^N$. Choosing $\eta$ and $\delta$ 
suitably small we get the claim.
\end{proof}

The last result of this section is that the composition $I^g_\eps:=\beta_g\circ\Phi_{\eps,g}$ 
is homotopic to the identity on $M$.
\begin{lemma}\label{lemma5-11}
There exists $\eps_2<\eps_0$ such that, for any $\eps\in(0,\eps_2)$ and for any 
$h\in \mathscr{B}_{\hat\rho}$ 
\begin{equation}
I^{g_0+h}_\eps:=\beta_{g_0+h}\circ\Phi_{\eps,g_0+h}:M\to M_{r(M)}
\end{equation}
is well defined and it is homotopic to the identity on $M$.
\end{lemma}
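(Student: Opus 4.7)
The plan is to first verify the target of the composed map, and then to build an explicit homotopy in $M_{r(M)}$ via the straight-line interpolation in $\R^N$, using the fact that $\Phi_{\eps,g_0+h}(q)$ concentrates at $q$ as $\eps\to 0$.

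For well-definedness, I would fix $\delta\in(0,\delta_0)$ and apply Proposition \ref{prop4-2} to obtain $\eps_2'=\eps_2(\delta)$ such that $\Phi_{\eps,g_0+h}(q)\in N_{\eps,g_0+h}\cap J_{\eps,g_0+h}^{m_\infty+\delta}$ for every $q\in M$ and every $h\in\mathscr{B}_{\hat\rho}$ when $\eps<\eps_2'$. Taking $\eps_2:=\min\{\eps_2',\eps_0\}$ and shrinking $\hat\rho$ if necessary to be below $\rho_0$ from Proposition \ref{prop5-10}, the same Proposition gives $\beta_{g_0+h}(\Phi_{\eps,g_0+h}(q))\in M_{r(M)}$, so $I^{g_0+h}_\eps$ lands in $M_{r(M)}$ as required.

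The heart of the proof is showing that $\beta_{g_0+h}(\Phi_{\eps,g_0+h}(q))\to q$ uniformly in $q\in M$ and $h\in\mathscr{B}_{\hat\rho}$ as $\eps\to 0$. Since the scalar $t(w^{g_0+h}_{q,\eps})$ cancels in the ratio defining $\beta_{g_0+h}$, we have, using normal coordinates centred at $q$ and the change of variables $x=\exp_q(\eps z)$,
\begin{equation*}
\beta_{g_0+h}(\Phi_{\eps,g_0+h}(q))-q
=\frac{\displaystyle\int_{B(0,R/\eps)}(\exp_q(\eps z)-q)\,U(z)^p\chi_R(\eps|z|)^p|g_{0,q}(\eps z)+h_q(\eps z)|^{1/2}dz}{\displaystyle\int_{B(0,R/\eps)}U(z)^p\chi_R(\eps|z|)^p|g_{0,q}(\eps z)+h_q(\eps z)|^{1/2}dz},
\end{equation*}
where the embedding $M\hookrightarrow\R^N$ is understood. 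The denominator converges to $|U|_p^p>0$ uniformly in $q$ and $h$ by exactly the same truncation argument used in the proof of Proposition \ref{prop4-2} (split at a ball $B(0,T)$, use compactness of $M$ and boundedness of $h$ to control the tail and the pointwise convergence on $B(0,T)$). The numerator is estimated by $|\exp_q(\eps z)-q|\le C\eps|z|$ on the support of $\chi_R(\eps|\cdot|)$, so the integral is $O(\eps)$ uniformly in $q,h$, and the ratio tends to $0$.

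With this convergence in hand, choose $\eps_2$ further small so that $|\beta_{g_0+h}(\Phi_{\eps,g_0+h}(q))-q|<r(M)$ for every $q\in M$ and $h\in\mathscr{B}_{\hat\rho}$, uniformly. Define
\begin{equation*}
H:M\times[0,1]\to\R^N,\qquad H(q,s)=(1-s)\,\beta_{g_0+h}(\Phi_{\eps,g_0+h}(q))+s\,q.
\end{equation*}
For each $(q,s)$ one has $\dist(H(q,s),M)\le (1-s)\dist(\beta_{g_0+h}(\Phi_{\eps,g_0+h}(q)),M)+s\cdot 0<r(M)$, so $H$ takes values in $M_{r(M)}$. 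Moreover $H(\cdot,0)=I^{g_0+h}_\eps$ and $H(\cdot,1)=\mathrm{id}_M$, establishing the homotopy.

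The main obstacle is the uniform smallness of $|\beta_{g_0+h}(\Phi_{\eps,g_0+h}(q))-q|$; the calculation itself is routine, but one must be careful to extract the uniformity in both $q\in M$ and $h\in\mathscr{B}_{\hat\rho}$. This follows from the compactness of $M$ and the $C^k$ control on the metric coefficients $(g_{0,q}+h_q)^{ij}$ already exploited in Section \ref{sec4}, together with the exponential decay of $U$ that justifies the tail truncation.
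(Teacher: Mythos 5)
Your proof is correct and follows essentially the same route as the paper: both reduce the lemma to the explicit computation of $\beta_{g_0+h}(\Phi_{\eps,g_0+h}(q))-q$ in normal coordinates (where the factor $t(w^{g_0+h}_{q,\eps})$ cancels), obtain a uniform $O(\eps)$ bound in $q$ and $h$, and conclude via the straight-line homotopy, which the paper leaves implicit and you write out. One cosmetic remark: the inequality $\dist(H(q,s),M)\le(1-s)\dist\bigl(\beta_{g_0+h}(\Phi_{\eps,g_0+h}(q)),M\bigr)$ need not hold for a nonconvex $M\subset\R^N$, but the intended conclusion follows directly from $\dist(H(q,s),M)\le|H(q,s)-q|=(1-s)\,|\beta_{g_0+h}(\Phi_{\eps,g_0+h}(q))-q|<r(M)$.
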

\begin{proof}
By Proposition \ref{prop4-2} and (\ref{eq5-10}) the map $I^{g_0+h}_\eps$ is well defined. 
To prove that $I^{g_0+h}_\eps$ is  homotopic to the identity on $M$ it is enough to evaluate 
the map. Here $g=g_0+h$.
\begin{equation}
I^g_\eps(q)-q=\frac{\eps\int_{B(0,R/\eps)} z|U(z)\chi_R(\eps|z|)|^p|g_q(\eps z)|^{\frac 12}dz}
{\int_{B(0,R/\eps)} |U(z)\chi_R(\eps|z|)|^p|g_q(\eps z)|^{\frac 12}dz},
\end{equation}
hence $| I^g_\eps(q)-q|\leq c\cdot\eps$ for a constant $c=c(M,g_0,\hat\rho)$ 
that does not depend on $q$ and 
on $h\in\mathscr{B}_{\hat\rho}$
\end{proof}

\section{Proof of technical lemmas}\label{sei}

\begin{proof}[Proof of Lemma \ref{lemma5-6}]
Here $g_k=g_0+h_k$ with $h_k\in \mathscr{B}_\rho$, $h_k\to 0$ and 
$\tilde u_k(y) =u_k(\exp_{q_k}(y))$. 
We recall that $\displaystyle \left(\frac 12-\frac 1p\right)|||u_k|||^2_{\eps_k,g_k}
\leq m_{\eps_k,g_k}+2\delta_k$.
By Remark \ref{rem4-3}, 
for $k$ large we get 
\begin{equation*}
\begin{split}
2\frac{2p}{p-2}&m_\infty  \geq \frac{1}{\eps^n_k}\int_Mu_k^2(x)d\mu_{g_k}\geq
\frac{1}{\eps^n_k} \int_{B_{g_k}(q_k,R)}\chi_R^2(|\exp_{q_k}^{-1}(x)|)u_k^2(x)d\mu_{g_k}=\\
&= \frac{1}{\eps^n_k} \int_{B(0,R)}\chi_R^2(|y|)u_k^2(\exp_{q_k}y)|g_{k,q_k}(y)|^{1/2}dy=\\
&=\int_{B(0,R/{\eps_k})}\chi_R^2(\eps_k|z_k|)\tilde u_k^2(\eps_kz)|g_{k,q_k}(\eps_kz)|^{1/2}dz\geq
\text{const}\int_{\R^n}w_k^2(z)dz.
\end{split}
\end{equation*}
Let us now estimate the $L^2$ norm for $\nabla w_k$
\begin{equation*}
\int_{B(0,R/\eps_k)} \sum_i\left(\frac{\partial w_k}{\partial z_i}(z)\right)^2dz=I_1+I_2+I_3,
\text{ where}
\end{equation*}
\begin{equation*}
\begin{split}
I_1&=\int_{B(0,R/\eps_k)}\chi_R^2(\eps_k|z|)\sum_i
\left(\frac{\partial \tilde u_k}{\partial z_i}(\eps_kz)\right)^2dz
\\
I_2&=\int_{B(0,R/\eps_k)}\tilde u_k^2(\eps_k|z|)\sum_i
\left(\frac{\partial \chi_R}{\partial z_i}(\eps_kz)\right)^2dz\\
I_3&=2\int_{B(0,R/\eps_k)}\chi_R(\eps_k|z|)\tilde u_k(\eps_kz)
\sum_i\frac{\partial \chi_R}{\partial z_i}(\eps_k|z|)
\frac{\partial \tilde u_k}{\partial z_i}(\eps_k|z|)
\end{split}
\end{equation*}
By Remark \ref{rho1} we have 
\begin{equation}
\begin{split}
2\frac{2p}{p-2}m_\infty&\geq
\frac{\eps_k^2}{\eps_k^n}\int_M |\nabla_{g_k}u_k(x)|^2d\mu_{g_k}\geq\\
&\geq\int_{B(0,R/\eps_k)}\left(
\sum_{ij}g_{k,q_k}^{ij}(\eps_kz)\frac{\partial \tilde u_k}{\partial z_i}(\eps_kz)
\frac{\partial \tilde u_k}{\partial z_j}(\eps_kz)\right)|g_{k,q_k}(\eps_kz)|^{1/2}dz\geq\\
&\geq\text{const} \int_{B(0,R/\eps_k)}|\nabla \tilde u_k(\eps_kz)|^2dz.
\end{split}
\end{equation}
Thus $I_1$ is bounded. Analogously for addenda $I_2$ and $I_3$
\end{proof}

\begin{proof}[Proof of Lemma \ref{lemma5-7}]
For any $\varphi\in C^\infty_0(\R^n)$ we have that, for $k$ big enough, 
$w_k(z)=u_k(\exp_{q_k}^{-1}(\eps_kz))$ for $z\in\supt \varphi$, because 
$\supt \varphi\subset\{z\ :\ \chi_R(\eps_k|z|)=1\}$ for $k$ big enough. 
We put $\displaystyle \hat\varphi_k(x)=\varphi_k\left(\frac 1{\eps_k}\exp_{q_k}^{-1}(x)\right)$
for $x\in M$. If $\supt \varphi\subset B(0,T)$, then 
$\supt \hat\varphi_k\subset B_{g_k}(q_k,\eps_kT)$ so we have 
\begin{equation}\label{5-7-1}
\begin{split}
J'_{\eps_k}&(u_k)[\hat\varphi_k]=\frac1{\eps^n}\int_M
\eps_k^2\nabla_{g_k}u_k\nabla_{g_k}\hat \varphi_k+u_k\hat\varphi_k-
(u_k^+)^{p-1}\hat\varphi_kd\mu_{g_k}=\\
&=\int_{B(0,T)}|g_{k,q_k}(\eps_kz)|^{1/2}
\left[\sum_{ij}g^{ij}_{k,q_k}(\eps_kz)\frac{\partial w_k}{\partial z_i}
\frac{\partial \hat\varphi_k}{\partial z_j}+w_k\hat\varphi_k-(w_k^+)^{p-1}\hat\varphi_kdz.
\right]
\end{split}
\end{equation}
By the Ekeland principle we have 
\begin{equation}
|J'_{\eps_k}(u_k)[\hat\varphi_k]|\leq\sqrt{\delta_k}|||\hat\varphi_k|||_{\eps_k}.
\end{equation}
It is sufficient to prove that $|||\hat\varphi_k|||_{\eps_k}$  is bounded to obtain that 
\begin{equation}\label{5-7-2}
J'_{\eps_k}(u_k)[\hat\varphi_k]\to0\ \ \text{ as }k\to\infty.
\end{equation}
In fact we have
\begin{equation}\label{5-7-3}
|||\hat\varphi_k|||^2_{\eps_k}=
\int_{B(0,T)}\left[\sum_{ij}g^{ij}_{k,q_k}(\eps_kz)\frac{\partial\hat\varphi_k}{\partial z_i}
\frac{\partial\hat\varphi_k}{\partial z_j}+\hat\varphi_k^2
\right]|g_{k,q_k}(\eps_kz)|^{1/2}dz
\end{equation}
and 
\begin{equation}\label{5-7-4}
\begin{split}
g^{ij}_{k,q_k}(\eps_kz)&=g^{ij}_{0,q_k}(\eps_kz)+h_k^{ij}(\eps_kz)=\\
&=\delta_{ij}+\eps_{k}dg_{0,q_k}(\theta \eps_kz)(z)+h_k^{ij}(\eps_kz).
\end{split}
\end{equation}
Because $h_k\to0$ as $k\to\infty$ in the Banach space $\mathscr{S}^k$, by (\ref{5-7-2}) and
(\ref{5-7-3}) we get the boundness of $|||\hat\varphi_k|||_{\eps_k}^2$.

By (\ref{5-7-1}) and (\ref{5-7-4}) we have
\begin{equation}\label{5-7-5}
J'_{\eps_k}(u_k)[\hat\varphi_k]\to J'_\infty(w)[\varphi]\ \ \forall\varphi\in C^\infty_0(\R^n).
\end{equation}
So by (\ref{5-7-5}) and (\ref{5-7-2}) we have $-\Delta w+w=|w|^{p-2}w$ with $w\geq0$. 
Now we show that $w\neq0$. By the definition of ``good'' partition, by Lemma \ref{lemma5-3} 
and by (\ref{eq5-10}) we can choose a number $T>0$ and $q_k\in M$ such that, for $k$ big enough
$q_k\in P_k\subset B_{g_k}(q_k,\eps_kT)$. By Remark \ref{rho1} 
and Lemma \ref{lemma5-3} we 
get, for $k$ large enough
\begin{equation}
\begin{split}
\int_{B(0,T)}(w_k^+)^p&=
\int_{B(0,T)}(u_k^+(\exp_{q_k}(\eps_kz)))^p\frac{|g_{k,q_k}(\eps_kz)|^{1/2}}{|g_{k,q_k}(\eps_kz)|^{1/2}}dz
\geq\\
&\geq\text{const}\frac1{\eps^n}\int_{B_{g_k}(q_k,\eps_kT)}|u^+(x)|^pd\mu_g\geq\text{const}\cdot\gamma.
\end{split}
\end{equation}
So we get $w\neq0$ because $w_k$ converges strongly to $w\in L^p(B(0,T))$ 
by Lemma \ref{lemma5-6}, hence $w_k^+$ converge strongly to $w^+=w$ in $L^p(B(0,T))$.
\end{proof}


\end{document}